\theoremstyle{plain}
\newcommand{\IC}{\mathbb{C}}
\newcommand{\cexp}{\mathcal{E}}
\newcommand{\norm}[1]{\left\Vert#1\right\Vert}
\newcommand{\set}[1]{\left\{#1\right\}}
\newtheorem{proposition}{Proposition}[section]
\newtheorem{lemma}[proposition]{Lemma}
\newtheorem{theorem}[proposition]{Theorem}
\newtheorem{corollary}[proposition]{Corollary}
\theoremstyle{definition}
\newtheorem{definition}[proposition]{Definition}
\theoremstyle{remark}
\newtheorem*{convention*}{Convention}
\newtheorem{example}[proposition]{Example}
\newtheorem{remark}[proposition]{Remark}
\numberwithin{equation}{section}
\newcommand{\ran}{\hbox{ran\,}}
\title[Homomorphic conditional expectations]{
Homomorphic conditional expectations \\ as noncommutative retractions}
\author{Robert Pluta}
\address{Department of Mathematics, University of California, Irvine, CA 92697-3875, USA}
\email{plutar@tcd.ie}
\author{Bernard Russo}
\address{Department of Mathematics, University of California, Irvine, CA 92697-3875, USA}
\email{brusso@math.uci.edu}
\date{\today}
\keywords{Conditional expectation, Kadison inequality, retraction, triple homomorphism, JC$^*$-triple}
\subjclass[2010]{Primary 46L99; Secondary 17C65}
\begin{document}

\begin{abstract}
   Let $A$ be a $C^*$-algebra and $\cexp\colon A \to A$ a conditional expectation.
  The Kadison-Schwarz inequality for completely positive maps, $$\cexp(x)^* \cexp(x) \leq \cexp(x^* x),$$
  implies that
  $$
  \norm{\cexp(x)}^2 \leq \norm{\cexp(x^* x)}.$$
  In this note we show that $\cexp$ is homomorphic
  (in the sense that $\cexp(xy) = \cexp(x)\cexp(y)$ for every $x, y$ in $A$)
  if and only if
  $$\norm{\cexp(x)}^2 = \norm{\cexp(x^*x)},$$
for every $x$ in $A$.  
  We also prove that a homomorphic conditional expectation on a commutative $C^*$-algebra $C_0(X)$ is given by composition with a continuous retraction of $X$.
 One may therefore consider homomorphic conditional expectations
  as noncommutative retractions.
  \end{abstract}
\maketitle
  \section{Introduction}
  It is easy to see that a conditional expectation $\cexp$ is homomorphic
  if and only if the kernel of $\cexp$ is an ideal.
  Thus, there are no nontrivial homomorphic conditional expectations
  on  simple $C^*$-algebras,
  but it makes sense to study homomorphic conditional expectations
  on $C^*$-algebras with rich ideal structure.
  It follows from \cite[Theorem 3.1]{Choi74} that a conditional expectation is homomorphic if and only if equality holds in the Kadison-Schwarz inequality for every $x$.  In our main result, Theorem~\ref{t:CharacterizationHomomorphicExpectations} below, we weaken the latter condition to equality of the norms.

  A central projection $p$ in a $C^*$-algebra $A$
  gives rise to a homomorphic conditional expectation
  $\cexp_p \colon A \to A$ given by $\cexp_p(x) = px$ for all $x$ in $A$.  As a bi-product of our main result, we prove a converse in Corollary~\ref{cor:0126171}.
 
A retraction  of a locally  compact Hausdorff space $X$, that is, a continuous map $\tau:X\rightarrow X$ such that $\tau\circ \tau=\tau$,
  gives rise to a homomorphic conditional expectation
  $\cexp_\tau \colon C_0(X) \to C_0(X)$ given by
  $\cexp_\tau(f) = f\circ \tau$ for all functions $f$ in $C_0(X)$.
  There are expectations on $C(K)$, $K$ compact, which do not come from retractions of $K$,
  but those expectations are not homomorphic.
  A~unital conditional expectation $\cexp \colon C(K) \to C(K)$
  is homomorphic if and only if it comes from some retraction of $K$ (Theorem~\ref{t:Retraction=HomomorphicExpectation} below),
  and, in accordance with Theorem~\ref{t:CharacterizationHomomorphicExpectations} below, this in turn is equivalent to the requirement that the conditional expectation satisfies
  $\norm{\cexp(f)}^2 = \norm{\cexp(|f|^2)}$
  for every $f$ in $C(K)$.
  A similar result holds for (not necessarily unital) commutative $C^*$-algebras  $C_0(X)$
  for a locally compact Hausdorff space $X$.
  Thus, in the framework of Gelfand duality,
  we have the equivalence:
  \begin{eqnarray*}
  \left(
  \begin{array}{c}
  \mbox{Retractions $\tau \colon X \to X$ of } \\
  \mbox{locally compact spaces $X$ } \\
  \end{array}
  \right) &   \Leftrightarrow   & \left(
  \begin{array}{c}
  \mbox{Homomorphic   }  \\
   \mbox{conditional expectations  }  \\
  \mbox{ $\cexp\colon C_0(X) \to C_0(X)$    } \\
  \end{array}
  \right) \\
   & \Leftrightarrow& \left(
  \begin{array}{c}
  \mbox{Conditional expectations}  \\
  \mbox{ $\cexp\colon C_0(X) \to C_0(X)$    } \\
  \mbox{with  $\norm{\cexp(f)}^2 = \norm{\cexp(|f|^2)}$ } \\
  \end{array}
  \right) \\
  \end{eqnarray*}
  We believe that this justifies the following definition: 
  A {\em noncommutative retraction} on a $C^*$-algebra $A$
  is a conditional expectation $\cexp \colon A \to A$ with
  $\cexp(xy) =\cexp(x) \cexp(y)$ for all $x, y\in A$.
  (By Theorem~\ref{t:CharacterizationHomomorphicExpectations} below, this is equivalent to the requirement that the conditional expectation satisfies
  $\norm{\cexp(x)}^2 = \norm{\cexp(x^*x)} $ for $x\in A$.)

\section{Basic properties of conditional expectations}

In this section we review some basic facts and terminology that relate to conditional expectations
in a general noncommutative setting of 
$C^*$-algebras.

\begin{definition}
\label{d:ConditionalExpectation}
A {\em conditional expectation}
defined on a $C^*$-algebra $A$
is a positive linear map    $\cexp \colon A \to A$
satisfying  $\cexp^2 =\cexp$ (where  $\cexp^2 = \cexp \circ \cexp$) and
\[
\cexp(\cexp(x) y) = \cexp(x) \cexp(y) \quad \mbox{for every $x, y$ in $A$.}
\]
\end{definition}

It follows that the range of $\cexp$ is a $C^*$-subalgebra of $A$. A  conditional expectation  $\cexp \colon A \to A$  also satisfies
\[
\cexp(x \cexp(y)) = \cexp(x) \cexp(y) \quad \mbox{for every $x, y$ in $A$.}
\]
  Thus $\cexp$ is a bimodule map over~its~range.  Moreover, $\cexp$ is completely positive and has norm 1  (\cite[Corollary II.6.10.3]{Blackadar2006}).
  The Kadison-Choi-Schwarz inequality is proved in \cite[Corollary 2.8]{Choi74}.

If $A$ is unital with the identity element $1$,
then the projection $e = \cexp(1)$ is an identity element of the range,
which is contained
in the corner $eAe$, i.e., the largest $C^*$-subalgebra of $A$ containing $e$ as the identity element.

\begin{remark}
By a corner of a $C^*$-algebra $A$ we mean a $C^*$-subalgebra $S$ of $A$
with the additional property that there is a norm closed linear subspace $M$ of $A$
such that  
$A = S \oplus M$
and $M$ is invariant under both left and right multiplication by elements of $S$, i.e. $SM \subseteq M$, $MS \subseteq M$.
It follows automatically that $M$ is also invariant under the $*$-operation, i.e. $M^* = M$, 
so it can be regarded as a (not necesarily unital) involutive Banach $S$-bimodule.
If $\cexp \colon A \to A$ is a conditional expectation, then the range of $\cexp$ is a corner of $A$.
On the other hand, if a $C^*$-subalgebra $S$ is not a corner of $A$, then there is no conditional expectation from $A$ onto $S$.
\end{remark}

It is clear that a corner of a unital $C^*$-algebra must be unital,
however the identity element of the corner need not be the same as the identity element of the ambient $C^*$-algebra.
This observation is useful.
It shows, for example, that if $H$ is an infinite-dimensional Hilbert space,
then the algebra of compact operators $\mathcal{K}(H)$ is not a corner of $\mathcal{B}(H)$.
Consequently, there is no conditional expectation from $\mathcal{B}(H)$ onto $\mathcal{K}(H)$.
Exactly the same argument shows that
there is no conditional expectation from $\ell^\infty$  onto  $c_0$.
Of course, these two observations can be strengthened to the assertion that
there is no conditional expectation from a unital $C^*$-algebra $A$ onto
a non-unital  $C^*$-subalgebra of $A$.

Regarding terminology,
we will occasionally refer to a conditional expectation
simply as an expectation
leaving the word  ``conditional''  implicit.
The following remark
provides us with some basic properties of expectations.

\begin{remark}
Let $A$ be a $C^*$-algebra
and let $\cexp\colon A \to A$ be a conditional expectation.
The range of $\cexp$,
which we denote by $S$,
is the $C^*$-subalgebra of $A$ consisting of all fixed points of $\cexp$.
The kernel of $\cexp$
is a norm closed linear subspace of $A$
that is closed under the $*$-operation and
invariant under left and right multiplication by elements of $S$.
In particular, letting $M$ be the~kernel~of~$\cexp$, one has
$M^* = M$,  $SM\subseteq M$,  $MS \subseteq M$.
The space $M$ can be regarded as an involutive Banach $S$-bimodule
if one does not require that $1m = m1 = m$,  for all $m\in M$,
even if $S$ has an identity $1 = 1_S$.
With this convention,
$A = S \oplus M$
is a direct sum in the category of involutive Banach $S$-bimodules
and the following  sequence
$$
\begin{CD}
0   @>>>   S   @>1>>   A   @>1-\cexp>>   M   @>>>   0
\end{CD}
$$
of $*$-preserving $S$-bimodule maps is exact.
\end{remark}

There is a link between certain projections and expectations.
It has already been observed that every nonzero
conditional expectation $\cexp\colon A \to A$ defined on a $C^*$-algebra $A$
is a projection of norm one. 
The converse of this observation does not hold in general.
For example, the mapping from the matrix algebra $M_2(\IC)$ into itself
that replaces each main diagonal entry of every 2-by-2 matrix with zero
is a projection of norm one, yet it is not a conditional expectation
because its range is not a subalgebra of $M_2(\IC)$.
However, every projection of norm one
whose range is a subalgebra
must be a conditional expectation;
this is a general version of the
well known theorem of Tomiyama~\cite{Tomiyama1957}, which we mention
for the sake of completeness. 
\section{Homomorphic conditional expectations}

The main result of this section is
Theorem~\ref{t:CharacterizationHomomorphicExpectations}.

\begin{definition}
Let $A$ be a $C^*$-algebra.
A conditional expectation $\cexp \colon A \to A$
is  {\em homomorphic} (or {\em multiplicative})
if
$
\cexp(xy) = \cexp(x)\cexp(y)
$
for every $x, y$ in $A$.
\end{definition}

We now give some examples of homomorphic  conditional expectations.
The first one describes a connection between
homomorphic conditional expectations and $C^*$-algebra homomorphisms.

\begin{example}
[Expectations onto graphs of $C^*$-algebra homomorphisms]
Let $A, B$ be $C^*$-algebras.
Let $A\oplus B$ be the $C^*$-algebra endowed with the maximum norm, 
with the summands as ideals,
and the algebraic operations performed pointwise.
If $\phi \colon A \to B$ is a $*$-homomorphism,
then the map
\begin{equation}
\label{e:CexpOntoGraph}
\cexp \colon A\oplus B \to A\oplus B,
\qquad \cexp(x , y) =  (x , \phi x), \qquad  x\in A, y\in B
\end{equation}
is a homomorphic conditional expectation of
$A\oplus B$ onto the graph of $\phi$.
Conversely, if
$\phi \colon A \to B$ is a function
and $\cexp$ given by (\ref{e:CexpOntoGraph}) is a homomorphic conditional expectation,
then $\phi$ is a $*$-homomorphism.
\end{example}

The projection on a direct sum of two $C^*$-algebras onto one of the summands is an example
of a homomorphic conditional expectation.  In particular, a split extension $E$ of a $C^*$-algebra $A$ by a $C^*$-algebra $B$ gives rise to homomorphic conditional expectations.

 \begin{example} In the theory of generalized inductive limits, due to Blackadar and Kirchberg (\cite[V.4.3]{Blackadar2006}),
 $NF$ algebras are not the same as strong $NF$ algebras (\cite[V.4.3.24.]{Blackadar2006}). Nevertheless, by  \cite[Corollary V.4.3.27]{Blackadar2006}, any $NF$ algebra $A$  is the range of a homomorphic conditional expectation defined on any split essential extension $B$ of $A$, which is in fact a strong $NF$ algebra. In this corollary, $A$ is called a retraction of $B$, which  partially motivated our use of the term {\it retraction}.
\end{example}

We will establish the following
characterization of homomorphic conditional expectations
in terms of operator norm and the Kadison-Schwarz inequality.
Recall that the Kadison-Schwarz inequality shows that
any conditional expectation  $\cexp \colon A \to A$
defined on a $C^*$-algebra $A$ satisfies
$\cexp(x)^* \cexp(x) \leq \cexp(x^* x)$
and consequently
$\norm{\cexp(x)}^2 \leq  \norm{\cexp(x^*x)}$
for every $x$ in $A$.

\begin{theorem}
\label{t:CharacterizationHomomorphicExpectations}
Let $A$ be a $C^*$-algebra
and let $\cexp \colon A \to A$ be a conditional expectation.
Then $\cexp$ is homomorphic if and only if
\begin{equation}
\norm{\cexp(x)}^2  =  \norm{\cexp(x^*x)} \quad \mbox{for every $x$ in $A$.}
\end{equation}
\end{theorem}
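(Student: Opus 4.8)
The plan is to prove the two implications separately, with essentially all the content residing in the reverse direction.

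The forward implication is immediate. Suppose $\cexp$ is homomorphic. Since $\cexp$ is positive and linear it is $*$-preserving, so $\cexp(x^*) = \cexp(x)^*$, and homomorphicity gives $\cexp(x^*x) = \cexp(x^*)\cexp(x) = \cexp(x)^*\cexp(x)$. The $C^*$-identity then yields $\norm{\cexp(x^*x)} = \norm{\cexp(x)^*\cexp(x)} = \norm{\cexp(x)}^2$, as required.

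For the reverse implication, the first and decisive step is to test the hypothesis on the kernel $M = \ker\cexp$. If $y \in M$, then $\cexp(y) = 0$, so the assumed equality gives $\norm{\cexp(y^*y)} = \norm{\cexp(y)}^2 = 0$; since $\cexp(y^*y) \geq 0$ by positivity, this forces $\cexp(y^*y) = 0$ for every $y \in M$. I expect this observation — that the norm equality, when evaluated on kernel elements, upgrades to the exact vanishing of $\cexp(y^*y)$ — to be the main obstacle, in the sense that it is the one genuinely nonroutine idea; everything afterward is bookkeeping with the bimodule structure recorded in the second remark of Section 2.

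The second step is that bimodule computation. Given an arbitrary $x \in A$, I would write $x = \cexp(x) + y$ with $y = x - \cexp(x) \in M$, expand $x^*x = \cexp(x)^*\cexp(x) + \cexp(x)^*y + y^*\cexp(x) + y^*y$, and apply $\cexp$ term by term. The element $\cexp(x)^*\cexp(x)$ lies in the range $S$ (a $C^*$-subalgebra), hence is fixed by $\cexp$; the two cross terms vanish because $\cexp$ is an $S$-bimodule map and $\cexp(y) = \cexp(y)^* = 0$, so that $\cexp(\cexp(x)^*y) = \cexp(x)^*\cexp(y) = 0$ and $\cexp(y^*\cexp(x)) = \cexp(y)^*\cexp(x) = 0$; and the final term $\cexp(y^*y)$ vanishes by the first step. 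This leaves $\cexp(x^*x) = \cexp(x)^*\cexp(x)$ for every $x$, i.e.\ equality throughout the Kadison--Schwarz inequality.

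Finally, equality in the Kadison--Schwarz inequality for every $x$ is equivalent to $\cexp$ being homomorphic by \cite[Theorem 3.1]{Choi74}, already invoked in the introduction, which closes the argument. If one prefers to avoid the citation, the vanishing $\cexp(w^*w) = 0$ for $w \in M$ gives, through the Cauchy--Schwarz inequality for the $S$-valued form $(u,v) \mapsto \cexp(u^*v)$, that $\cexp(uv) = 0$ for all $u,v \in M$; expanding $\cexp(ab)$ with $a = \cexp(a) + u$ and $b = \cexp(b) + v$ then produces $\cexp(ab) = \cexp(a)\cexp(b)$ directly.
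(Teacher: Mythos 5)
Your proof is correct, but it takes a genuinely different route from the paper's main argument --- in fact it is essentially the alternative proof that the authors themselves record in Remark~\ref{rem:0602171}. Your decisive step is the same as the paper's: testing the norm equality on $y\in\ker\cexp$ to get $\cexp(y^*y)=0$. From there you use the decomposition $x=\cexp(x)+y$ and the bimodule identities to obtain exact equality $\cexp(x^*x)=\cexp(x)^*\cexp(x)$ in the Kadison--Schwarz inequality for every $x$, and then close with Choi's multiplicative-domain theorem \cite[Theorem 3.1]{Choi74} (or, in your citation-free variant, the $2$-positivity Cauchy--Schwarz argument, which does give $\cexp(uv)=0$ for $u,v\in\ker\cexp$ since the kernel is $*$-closed and both diagonal entries of the relevant positive $2\times 2$ matrix vanish). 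The paper instead shows in Lemma~\ref{l:|Cexp(x)|^2=|Cexp(x*x)|<=>KernelJordanIdeal} that the kernel is a closed Jordan $*$-ideal --- starting from the same observation $\cexp(y^*y)=0$, then polarizing the Jordan product and using the bimodule property --- and appeals to the Civin--Yood theorem that a closed Jordan ideal of a $C^*$-algebra is a two-sided ideal, so that Lemma~\ref{l:CexpHomomorphic<=>KernelIdeal} applies. Your argument is shorter and stays entirely within standard completely positive map machinery; the paper's Jordan-ideal route is deliberately chosen because it transfers to ternary rings of operators and $JC^*$-triples (cf.\ Proposition~\ref{thm:1208161}), where the notion of multiplicative domain is not available. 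Both directions of your proof are sound as written.
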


In the proof we will make use of the fact that  a 
closed Jordan ideal (defined in the proof of Lemma~\ref{l:|Cexp(x)|^2=|Cexp(x*x)|<=>KernelJordanIdeal}) in a $C^*$-algebras~$A$
is a two-sided ideal of $A$.  (\cite[Theorem~5.3]{CivinYood1965},
also see~\cite[Remark p.188]{BartonTimoney1986})
and the observations made in
Lemmas~\ref{l:CexpHomomorphic<=>KernelIdeal}
and ~\ref{l:|Cexp(x)|^2=|Cexp(x*x)|<=>KernelJordanIdeal}.

\begin{lemma}
\label{l:CexpHomomorphic<=>KernelIdeal}
A conditional expectation $\cexp \colon A \to A$
defined on a $C^*$-algebra $A$ is homomorphic if and only if
the kernel of $\cexp$ is an ideal in  $A$.
\end{lemma}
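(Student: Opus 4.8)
The plan is to argue directly from the definition, writing $M=\ker\cexp$ and $S=\ran\cexp$ and exploiting the defining bimodule identities of Definition~\ref{d:ConditionalExpectation} together with the fact that $\cexp$ fixes $S$ pointwise. Neither implication requires any analytic machinery; the content is purely algebraic and follows from how the direct-sum decomposition $A=S\oplus M$ interacts with multiplication.

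For the forward direction I would assume $\cexp$ is homomorphic and show $M$ is a two-sided ideal. Given $m\in M$ and $a\in A$, the homomorphic property gives
\[
\cexp(am)=\cexp(a)\cexp(m)=0,\qquad \cexp(ma)=\cexp(m)\cexp(a)=0,
\]
so $am,ma\in\ker\cexp=M$, which is exactly the statement that $M$ is an ideal.

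For the converse I would assume $M$ is an ideal and prove $\cexp(xy)=\cexp(x)\cexp(y)$ for all $x,y\in A$. The key observation is that the defining identity $\cexp(\cexp(x)y)=\cexp(x)\cexp(y)$ already supplies one of the two terms, so that
\[
\cexp(xy)-\cexp(x)\cexp(y)=\cexp(xy)-\cexp(\cexp(x)y)=\cexp\!\left((x-\cexp(x))y\right).
\]
Since $\cexp(x-\cexp(x))=\cexp(x)-\cexp(x)=0$, the element $x-\cexp(x)$ lies in $M$, and because $M$ is a (left) ideal we have $(x-\cexp(x))y\in M=\ker\cexp$; hence the right-hand side vanishes and the two maps agree.

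There is no genuine obstacle here: the only thing to get right is which of the two bimodule identities to invoke so that the difference $\cexp(xy)-\cexp(x)\cexp(y)$ collapses to $\cexp$ applied to an element of the kernel. An alternative is to decompose $x=\cexp(x)+(x-\cexp(x))$ and $y=\cexp(y)+(y-\cexp(y))$, expand $xy$ into four terms, and observe that the three terms involving a factor from $M$ land in the ideal $M$ while $\cexp(x)\cexp(y)\in S$ is fixed by $\cexp$; but the one-line reduction above is cleaner and avoids checking that $S$ is closed under products.
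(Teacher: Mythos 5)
Your proof is correct and supplies exactly the ``straightforward consequence of conditional expectation properties'' that the paper leaves unproved: the forward direction is immediate from multiplicativity, and the converse reduces $\cexp(xy)-\cexp(x)\cexp(y)$ to $\cexp\bigl((x-\cexp(x))y\bigr)$ via the defining identity $\cexp(\cexp(x)y)=\cexp(x)\cexp(y)$, which vanishes since $x-\cexp(x)\in\ker\cexp$. One minor terminological slip: the containment $(x-\cexp(x))y\in M$ uses that $M$ is a \emph{right} ideal (i.e.\ $MA\subseteq M$), not a left ideal, but this is immaterial because the hypothesis provides a two-sided ideal.
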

\begin{proof}
This is a straightforward consequence of conditional expectation properties.
\end{proof}

\begin{lemma}
\label{l:|Cexp(x)|^2=|Cexp(x*x)|<=>KernelJordanIdeal}
Let $A$ be a $C^*$-algebra.
If $\cexp \colon A \to A$ is a conditional expectation satisfying
$\norm{\cexp(x)}^2  =  \norm{\cexp(x^*x)}$ for all $x \in A$,
then the kernel of $\cexp$ is a closed Jordan $*$-ideal in $A$.
\end{lemma}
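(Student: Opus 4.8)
The plan is to squeeze from the norm hypothesis one clean algebraic fact---that $\cexp$ kills $m^*m$ for every $m$ in the kernel---and then to propagate it to the ideal property by polarization and the bimodule structure of the kernel. Write $M=\ker\cexp$ and $S=\ran\cexp$. The first step is immediate and is really the heart of the matter: if $m\in M$ then $\cexp(m)=0$, so applying the hypothesis $\norm{\cexp(x)}^2=\norm{\cexp(x^*x)}$ to $x=m$ gives $\norm{\cexp(m^*m)}=\norm{\cexp(m)}^2=0$, whence $\cexp(m^*m)=0$. Thus $m^*m\in M$ for every $m\in M$.

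The second step is to polarize. Using the identity $4\,m_1^*m_2=\sum_{k=0}^{3} i^k (m_2+i^k m_1)^*(m_2+i^k m_1)$ and the fact that $M$ is a complex-linear subspace (so each $m_2+i^k m_1$ again lies in $M$), the first step yields $\cexp(m_1^*m_2)=0$ for all $m_1,m_2\in M$. Since $M^*=M$ (a general property of conditional expectations, recorded in the earlier Remark), this is precisely the statement that $\cexp(m_1 m_2)=0$, i.e. $MM\subseteq M$.

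Finally I would assemble these facts with the module structure of the kernel already noted, namely $M^*=M$, $SM\subseteq M$, $MS\subseteq M$, and $A=S\oplus M$. For arbitrary $a\in A$ and $m\in M$, writing $a=\cexp(a)+(a-\cexp(a))$ with $\cexp(a)\in S$ and $a-\cexp(a)\in M$, one gets $am\in SM+MM\subseteq M$ and likewise $ma\in M$. In particular the Jordan product $\tfrac12(am+ma)$ lies in $M$, and $M$ is already norm closed and $*$-closed, so $M$ is a closed Jordan $*$-ideal. (In fact $am\in M$ and $ma\in M$ separately, so $M$ is even a two-sided $*$-ideal, which is stronger than what the lemma asserts.)

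I do not anticipate a serious obstacle: the one genuinely substantive move is the observation that feeding a kernel element straight into the hypothesis forces $\cexp(m^*m)=0$, after which the argument is routine polarization and bookkeeping with the bimodule identities for $\cexp$. The only point needing a little care is the polarization, where one must use that $M$ is a complex-linear and $*$-closed subspace in order to pass from $\cexp(m^*m)=0$ to $\cexp(m_1 m_2)=0$; everything else is formal.
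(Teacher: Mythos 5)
Your proof is correct, and it takes a genuinely different route that in fact yields a stronger conclusion than the lemma asks for. The paper restricts first to self-adjoint elements of $M=\ker\cexp$, uses the real polarization $y\cdot z=[(y+z)^2-(y-z)^2]/4$ of the Jordan product, and then splits general elements into real and imaginary parts; this only establishes that $M$ is a Jordan ideal, and the paper must later invoke the Civin--Yood theorem (a closed Jordan ideal of a $C^*$-algebra is a two-sided ideal) to finish the proof of Theorem~\ref{t:CharacterizationHomomorphicExpectations}. You instead apply the full complex polarization identity
\[
4\,m_1^*m_2=\sum_{k=0}^{3} i^k\,(m_2+i^k m_1)^*(m_2+i^k m_1)
\]
to the single fact $\cexp(m^*m)=0$ for $m\in M$, obtaining $MM\subseteq M$ directly, and then the decomposition $a=\cexp(a)+(a-\cexp(a))$ together with $SM\subseteq M$, $MM\subseteq M$ shows $M$ is a two-sided $*$-ideal outright. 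This is stronger than the stated conclusion, it removes the dependence on Civin--Yood, and combined with Lemma~\ref{l:CexpHomomorphic<=>KernelIdeal} it gives a self-contained proof of Theorem~\ref{t:CharacterizationHomomorphicExpectations}. What the paper's Jordan-product formulation buys in exchange is portability: as Remark~\ref{rem:0602171} indicates, the argument via squares of self-adjoint elements and Jordan polarization is the one that transfers to the JC$^*$-triple setting of Proposition~\ref{thm:1208161}, where the associative product (and hence your sesquilinear polarization of $m_1^*m_2$) is not available.
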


\begin{proof}
We use $M$ to denote the kernel of $\cexp$.
It is clear that $M$ is a closed linear subspace of $A$
which is also closed under the $*$-operation.
We need only to prove that $M$ is a Jordan ideal in the sense that
if $x\in A$ and $y\in M$,
then the Jordan product $x \cdot y = \frac{1}{2}(xy + yx)$ is in $M$.
The proof of this fact will proceed through several steps.

First,
if $y \in M$, then $y^*y \in M$
by the assumption $\norm{\cexp(y)}^2 = \norm{\cexp(y^*y)}$.
In~particular, $y^2 \in M$ for all self-adjoint elements $y \in M$.

Second,
if $y, z$ are self-adjoint elements of $M$,
then  by the preceding paragraph,
both $(y+z)^2$ and $(y-z)^2$ are in $M$, and one has
$$ y \cdot z = [(y+z)^2  -  (y-z)^2]/4. $$
It follows that $y \cdot z \in M$,
whenever $y, z$ are self-adjoint elements of $M$.

Third,
if $y, z$ are arbitrary elements of $M$,
write $y = y_1 + iy_2$ and $z = z_1 + iz_2$
with $y_i = y_i^*$ and $z_i = z_i^*$ in $M$,
and split the Jordan product $y \cdot z$ into real and imaginary parts as
$$y \cdot z = y_1 \cdot z_1 - y_2 \cdot z_2 + i(   y_1 \cdot z_2 + y_2 \cdot z_1).$$
By the preceding paragraph,
each of the four terms $y_i \cdot z_j$  appearing above is in $M$,
thus  $y \cdot z\in M$.
At this stage, we may conclude that $M$ is closed under the Jordan product
and we may indicate this by writing  $M \cdot M \subseteq M$.

Fourth,
since $M$ is invariant under both left and right multiplication
by elements of the range of $\cexp$, which we denote by $\cexp(A)$,
it follows that $\cexp(A) \cdot M \subseteq M$.
That is,
the Jordan product $\cexp(x) \cdot y$ is in $M$
for all $x\in A$ and all $y\in M$.

Finally,
if $x\in A$ and $y\in M$,
then the Jordan product
$$x \cdot y  =  \cexp(x) \cdot y   +   (x - \cexp(x)) \cdot y $$
is in $M$ because, by what we have proved,
$\cexp(x) \cdot y \in \cexp(A) \cdot M \subseteq M$
and $(x - \cexp(x)) \cdot y \in M \cdot M \subseteq M$.
Thus $M$ is a Jordan ideal (and a Banach $*$-subspace of $A$).
\end{proof}

We now turn to the proof of Theorem~\ref{t:CharacterizationHomomorphicExpectations}.

\begin{proof}[Proof of Theorem~\ref{t:CharacterizationHomomorphicExpectations}]
Let $\cexp \colon A \to A$ be a conditional expectation satisfying
$\norm{\cexp(x)}^2  =  \norm{\cexp(x^*x)}$  for every $x$ in $A$.
Then by Lemma~\ref{l:|Cexp(x)|^2=|Cexp(x*x)|<=>KernelJordanIdeal}
the kernel of $\cexp$ is a closed Jordan $*$-ideal in $A$,
hence a two-sided ideal.
It follows that $\cexp$ is homomorphic
by the observation made in Lemma~\ref{l:CexpHomomorphic<=>KernelIdeal}.
\end{proof}

We have already mentioned that if $p$ is a central projection in a $C^*$-algebra $A$, then the map $\cexp_p \colon A \to A$
defined by $\cexp_p(x) = px$,  for all $x\in A$, is a homomorphic conditional expectation.  We prove the converse in Proposition~\ref{prop:0129171}.

\begin{lemma}\label{lem:0126171}
Let $e$ be a projection in a von Neumann algebra $A$, and suppose $\cexp_e(x)=exe$ is a homomorphism.

(i) If $1-e$ is subequivalent to $e$, then $e=1$.

(ii) If $e$ is subequivalent to $1-e$, then $e=0$.
\end{lemma}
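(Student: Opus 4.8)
The plan is to avoid arguing directly that $e$ is central and instead exploit the partial isometry furnished by the subequivalence hypothesis together with the multiplicativity of $\cexp_e$. First I would rewrite the hypothesis in a usable form: the statement that $\cexp_e$ is a homomorphism means $\cexp_e(ab)=\cexp_e(a)\cexp_e(b)$, that is,
\[
eabe=(eae)(ebe)\qquad\text{for all }a,b\in A.
\]
The single computational device I would use is that $\cexp_e$ is $*$-preserving, $\cexp_e(x^*)=ex^*e=(exe)^*=\cexp_e(x)^*$, so it annihilates $x^*$ whenever it annihilates $x$.

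For part (i), let $v\in A$ be a partial isometry with $v^*v=1-e$ and $vv^*\le e$, which exists precisely because $1-e$ is subequivalent to $e$. The key preliminary observation is that $\cexp_e$ kills $v$: from the partial-isometry relation $v=v(v^*v)=v(1-e)$ one reads off $ve=0$, hence $\cexp_e(v)=eve=0$ and $\cexp_e(v^*)=\cexp_e(v)^*=0$. Now I would feed the \emph{product} $vv^*$ into the multiplicativity identity. Since $vv^*\le e$ we have $\cexp_e(vv^*)=e(vv^*)e=vv^*$, whereas $\cexp_e(v)\cexp_e(v^*)=0$; multiplicativity forces $vv^*=0$, so $v=0$ and therefore $1-e=v^*v=0$, i.e. $e=1$. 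Part (ii) is the mirror image: choose $w\in A$ with $w^*w=e$ and $ww^*\le 1-e$. Here $ww^*\le 1-e$ gives $ew=e(ww^*)w=0$, so $\cexp_e(w)=\cexp_e(w^*)=0$, and applying multiplicativity this time to the product $w^*w=e$ yields $e=\cexp_e(w^*w)=\cexp_e(w^*)\cexp_e(w)=0$.

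The step I expect to carry the real content is the choice of which product to insert into the homomorphism identity: of the two range projections $vv^*$ and $v^*v$, only the one lying under $e$ survives $\cexp_e$ and produces information, while the other yields the vacuous identity $0=0$. Once the annihilation $\cexp_e(v)=0$ is in hand, the rest is bookkeeping with $v=vv^*v$ and the equivalence $p\le e\iff ep=p$ for projections. I note in passing that the identity $eabe=(eae)(ebe)$ already forces $ex(1-e)=0$ for every $x\in A$ (apply it with $b=a^*$ to get $\|ea(1-e)\|^2=\|ea(1-e)a^*e\|=0$), so that $e$ is automatically central; combined with either subequivalence hypothesis this again pins down $e\in\{0,1\}$. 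The partial-isometry argument above, however, reaches the stated conclusion directly within comparison theory, and is the one I would present.
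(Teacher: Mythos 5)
Your proof is correct and is essentially the paper's own argument: both extract the partial isometry supplied by the subequivalence, note that $\cexp_e$ annihilates it because one of its two defining projections is orthogonal to $e$, and then apply multiplicativity to the product $vv^*$ (resp.\ $w^*w$), which $\cexp_e$ fixes because it lies under $e$ (resp.\ equals $e$); the paper merely writes the same computation inline with $u=v^*$. Your parenthetical observation that $eabe=(eae)(ebe)$ with $b=a^*$ already forces $ea(1-e)=0$, hence centrality of $e$, is also valid and would give a shorter route to Proposition~\ref{prop:0129171} that bypasses the comparability theorem.
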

\begin{proof}
Since $\cexp_e$ is a homomorphism, we have $exeye=exye$ for every $x,y\in A$.  If $1-e$ is subequivalent to $e$, then by definition, there exists $u\in A$ satisfying $uu^*=1-e$ and $u^*u=h\le e$.  Then $$h=ehe=eu^*ue=eu^*eue=eu^*uu^*eue=eu^*(1-e)eue=0,$$ which proves (i).

If $e$ is subequivalent to $1-e$, there exists $u\in A$ satisfy $uu^*=e$ and $u^*u=h\le 1-e$.
Then $$e=euu^*e=eueu^*e=euu^*ueu^*e=euheu^*e=euh(1-e)eu^*e=0,$$ which proves (ii).
\end{proof}
\begin{proposition}\label{prop:0129171}
If $e$ is a projection in a $C^*$-algebra $A$, and $\cexp_e(x)=exe$ is a homomorphism, then $e$ belongs to the center of $A$.
\end{proposition}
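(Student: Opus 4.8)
The plan is to prove the statement in two stages: first for the case where $A$ is a von Neumann algebra, where the comparison theory of projections together with Lemma~\ref{lem:0126171} applies directly, and then to reduce an arbitrary $C^*$-algebra to that case by passing to its bidual $A^{**}$, the enveloping von Neumann algebra, in which $e$ is still a projection and $\cexp_e$ is still a homomorphism.

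Suppose first that $A$ is a von Neumann algebra. The idea is to use the comparison theorem for projections applied to the pair $e$ and $1-e$: it yields a central projection $z\in A$ for which $ze$ is subequivalent to $z(1-e)$, while $(1-z)(1-e)$ is subequivalent to $(1-z)e$. Because $z$ is central, the corners $zA$ and $(1-z)A$ are von Neumann algebras with identities $z$ and $1-z$, and $\cexp_e$ restricts to a homomorphism on each of them; indeed, for $x\in zA$ one has $x=zxz$, so $exe=(ze)x(ze)$, meaning the restriction of $\cexp_e$ to $zA$ is $\cexp_{ze}$ (and likewise on $(1-z)A$). Now $ze$ is a projection in $zA$ whose complement there is $z(1-e)=z-ze$, so the first subequivalence lets me apply Lemma~\ref{lem:0126171}(ii) inside $zA$ to conclude $ze=0$; dually, the complement of $(1-z)e$ in $(1-z)A$ is $(1-z)(1-e)$, so the second subequivalence lets me apply Lemma~\ref{lem:0126171}(i) inside $(1-z)A$ to conclude $(1-z)e=1-z$. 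Adding, $e=ze+(1-z)e=0+(1-z)=1-z$, which is central. This settles the von Neumann algebra case.

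For a general $C^*$-algebra $A$, I would view $A$ as a weak*-dense $C^*$-subalgebra of $A^{**}$, in which $e$ remains a projection and products agree with those computed in $A$. The crucial step is to verify that $\cexp_e$ is still multiplicative on $A^{**}$. The hypothesis gives $exeye=exye$ for all $x,y\in A$. Since left and right multiplication by the fixed element $e$ are weak*-continuous and multiplication in a von Neumann algebra is separately weak*-continuous, for each fixed $y\in A$ the map $x\mapsto exeye-exye$ is weak*-continuous on $A^{**}$ and vanishes on the weak*-dense set $A$, hence vanishes on all of $A^{**}$; running the same density argument in the second variable then gives $exeye=exye$ for all $x,y\in A^{**}$. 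Thus $\cexp_e$ is a homomorphism on the von Neumann algebra $A^{**}$, so by the case already proved $e$ is central in $A^{**}$, and in particular $ex=xe$ for every $x\in A$, which is the assertion. I expect the main obstacle to be exactly this transfer to the bidual: because multiplication is only separately, not jointly, weak*-continuous, the persistence of the homomorphism identity cannot be read off in one stroke and must be obtained by the two-step, one-variable-at-a-time density argument indicated above; the remaining content is the bookkeeping that arranges the von Neumann case so that only the two subequivalence comparisons covered by Lemma~\ref{lem:0126171} are needed.
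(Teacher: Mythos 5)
Your proof is correct and follows essentially the same route as the paper: pass to the bidual, apply the comparability theorem to $e$ and $1-e$ to get a central $z$, and use the two halves of Lemma~\ref{lem:0126171} on the corners $zA$ and $(1-z)A$ to conclude $e=1-z$. The only difference is that you spell out the weak*-density argument justifying that $\cexp_e$ remains multiplicative on $A^{**}$, which the paper leaves implicit in the phrase ``by passing to the second dual.''
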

\begin{proof}
By passing to the second dual, it suffices to assume that $A$ is a von Neumann algebra.  Apply the comparability theorem (\cite[III.1.1.10]{Blackadar2006}) to the projections $e$ and $1-e$ to obtain a central projection $z$ such that
$ze$ is subequivalent to $z(1-e)$ and $(1-z)(1-e)$ is subequivalent to $(1-z)e$.  With $A=Az\oplus A(1-z)$ we have $\cexp_e=\cexp_{ez}\oplus \cexp_{e(1-z)}$.  Then by Lemma~\ref{lem:0126171}, $ez=0$ and $e(1-z)=1-z$, so that $e=ez+e(1-z)=1-z$ is in the center of $A$.
\end{proof}

\begin{corollary}\label{cor:0126171}
If $e$ is a projection in a $C^*$-algebra $A$, and $\|exe\|=\|ex\|$ for every $x\in A$, then $e$ belongs to the center of $A$.
\end{corollary}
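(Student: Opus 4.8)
The plan is to deduce this from Theorem~\ref{t:CharacterizationHomomorphicExpectations} together with Proposition~\ref{prop:0129171}. First I would record that the map $\cexp_e\colon A\to A$, $\cexp_e(x)=exe$, is a conditional expectation onto the corner $eAe$: it is positive (for $x=a^*a$ one has $exe=(ae)^*(ae)\ge 0$), idempotent since $e(exe)e=exe$, and a bimodule map over its range since $\cexp_e(\cexp_e(x)y)=e(exe)ye=exeye=\cexp_e(x)\cexp_e(y)$. Thus Theorem~\ref{t:CharacterizationHomomorphicExpectations} and Proposition~\ref{prop:0129171} both apply to $\cexp_e$.

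The second step is to rewrite the hypothesis in a more convenient, symmetric form. Applying $\|exe\|=\|ex\|$ with $x^*$ in place of $x$ gives $\|ex^*e\|=\|ex^*\|$. Since the norm is invariant under the $*$-operation and $e=e^*$, we have $\|ex^*e\|=\|(ex^*e)^*\|=\|exe\|$ and $\|ex^*\|=\|(ex^*)^*\|=\|xe\|$, so the hypothesis is equivalent to $\|exe\|=\|xe\|$ for every $x\in A$.

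The key computation is then a single application of the $C^*$-identity. Because $(xe)^*(xe)=ex^*xe$, one has $\|\cexp_e(x^*x)\|=\|ex^*xe\|=\|(xe)^*(xe)\|=\|xe\|^2=\|exe\|^2=\|\cexp_e(x)\|^2$ for every $x\in A$. This is exactly the norm equality $\|\cexp_e(x)\|^2=\|\cexp_e(x^*x)\|$ of Theorem~\ref{t:CharacterizationHomomorphicExpectations}, which therefore shows that $\cexp_e$ is homomorphic. Proposition~\ref{prop:0129171} then gives that $e$ lies in the center of $A$, finishing the argument.

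The one point that needs care, and which I expect to be the only subtlety, is the symmetrization in the second step. Taken at face value the hypothesis $\|exe\|=\|ex\|$ yields $\|exe\|^2=\|x^*ex\|$, which is \emph{not} the quantity $\|ex^*xe\|$ required to invoke the theorem. Passing to $x^*$ and taking adjoints converts the hypothesis into the equivalent form $\|exe\|=\|xe\|$, and it is this version that matches $\|ex^*xe\|=\|xe\|^2$ through the $C^*$-identity. Everything else is routine.
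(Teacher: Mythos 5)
Your proof is correct and follows essentially the same route as the paper: check the norm condition of Theorem~\ref{t:CharacterizationHomomorphicExpectations} for $\cexp_e$ to conclude that $\cexp_e$ is homomorphic, then invoke Proposition~\ref{prop:0129171}. The paper's one-sentence proof leaves implicit both the symmetrization step (passing from $\|exe\|=\|ex\|$ to $\|exe\|=\|xe\|$ via $x\mapsto x^*$) and the $C^*$-identity computation $\|ex^*xe\|=\|xe\|^2$; you have simply supplied those details.
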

\begin{proof}
By Theorem~\ref{t:CharacterizationHomomorphicExpectations}, the assumption $\|exe\|=\|ex\|$ for every $x\in A$ implies that $\cexp_e$ is a homomorphism.
\end{proof}
As pointed out to us by Matt Neal, Corollary~\ref{cor:0126171} also follows from \cite[Lemmas 1.5 and 1.6]{FriRusJRAM85}. An elegant elementary proof of \cite[Lemma 1.5]{FriRusJRAM85} appears in \cite{PeraltaEM15}.  Another topological characterization of central projections is given in
\cite{Kato76}, namely a projection in a von Neumann algebra is central if and only if it is an isolated point in the set of projections with the norm topology.

\begin{remark}\label{rem:0602171}
The authors have recently learned  that there is an alternative
argument that proves
Theorem~\ref{t:CharacterizationHomomorphicExpectations}:
Since $x := a - \cexp(a)$ belongs to $\ker \cexp$,
$
|| \cexp(x^*x) || = || \cexp(x) ||^2 = 0 
$
immediately implies that $a$ belongs to
the multiplicative domain of $\cexp$ (\cite[Theorem 3.1]{Choi74}). This argument can be applied to prove
two other results (see Propositions~\ref{prop:0603171} and \ref{prop:0603172}).
However, the method presented in
our proof of Theorem~\ref{t:CharacterizationHomomorphicExpectations}
can be used to deduce a similar operator norm characterization of
multiplicative conditional expectations in the context of ternary rings of operators 
and Jordan triple systems (where the concept of
multiplicative domain is not applicable).  For example, see
Proposition~\ref{thm:1208161}.
\end{remark}

The two results which follow, and the tools used in their proofs, are valid for abstract JB$^*$-triples, for which a reference is the monograph \cite[Definition 2.5.25]{chubook}.   The principal example of a JB$^*$-triple is a JC$^*$-triple, that is, a norm closed subspace $A$ of a C$^*$-algebra which is closed under the symmetrized triple product $\{xyz\}_A:=(xy^*z+zy^*x)/2$.  We therefore phrase these
two results in this context.

A {\it triple homomorphism} is a linear mapping $T:A\rightarrow B$ between two JC$^*$-triples which preserves the triple product: $T\{xyz\}_A=\{Tx,Ty,Tz\}_B$.  A {\it triple ideal} is a subspace $I$ of a JC$^*$-triple $A$ satisfying $\{IAA\}_A+\{AIA\}_A\subset I$.

Let $A$ be a JC$^*$-triple, with triple product denoted $\{abc\}_A$ (or just  $\{abc\}$) and let $P:A\rightarrow A$ be a nonzero contractive
projection: $P^2=P$, $\|P\|=1$.  We have the ``conditional expectation'' formulas (\cite[Corollary 1]{FriRusPJM})
\begin{equation}\label{eq:1208164}
P\{x,Py,Pz\}=P\{Px,Py,Pz\}=P\{Px,y,Pz\}\hbox{ for all }x,y,z\in A.
\end{equation}
We recall (\cite[Theorem 2]{FriRusJFA}, \cite[Theorem 3.3.1]{chubook}) that $P(A)$ is isometric to a JC$^*$-triple under the norm of $A$ and the  triple product 
\begin{equation}\label{eq:0602172}
\{Px,Py,Pz\}_{P(A)}:=P(\{Px,Py,Pz\}_A).
\end{equation}

\begin{lemma}
\label{lem:1208161}
A contractive projection $P \colon A \to A$
defined on a JC$^*$-triple $A$ is a triple homomorphism of $A$ into $P(A)$, that is,  for all $a,b,c\in A$, 
\begin{equation}\label{eq:1208161}
P\{abc\}_A=\{Pa,Pb,Pc\}_{P(A)},
\end{equation}  if and only if
the kernel of $P$ is a triple ideal in  $A$.
\end{lemma}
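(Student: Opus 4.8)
The plan is to mirror the argument for the associative case in Lemma~\ref{l:CexpHomomorphic<=>KernelIdeal}, with the Jordan triple product playing the role of the product. Write $M=\ker P$. The first observation is that, by the definition (\ref{eq:0602172}) of the triple product on $P(A)$, the asserted identity $P\{abc\}_A=\{Pa,Pb,Pc\}_{P(A)}$ is simply
\[
P\{abc\}_A=P\{Pa,Pb,Pc\}_A\qquad\text{for all }a,b,c\in A,
\]
so the whole equivalence can be phrased inside $A$. I will also use repeatedly that the triple product $\{xyz\}_A=(xy^*z+zy^*x)/2$ is linear in $x$ and $z$, conjugate-linear in $y$, and symmetric in the outer variables, $\{xyz\}_A=\{zyx\}_A$.

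For the forward direction, assume $P$ is a triple homomorphism. Given $m\in M$ and $a,b\in A$, the homomorphism identity together with linearity in the first slot gives $P\{m,a,b\}_A=\{Pm,Pa,Pb\}_{P(A)}=\{0,Pa,Pb\}_{P(A)}=0$, so $\{m,a,b\}_A\in M$; likewise, using conjugate-linearity in the middle slot, $P\{a,m,b\}_A=\{Pa,Pm,Pb\}_{P(A)}=0$, so $\{a,m,b\}_A\in M$. By outer symmetry $\{a,b,m\}_A=\{m,b,a\}_A\in M$ as well. Hence $\{MAA\}_A+\{AMA\}_A\subseteq M$, i.e. $M$ is a triple ideal.

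For the converse, assume $M$ is a triple ideal. For $a,b,c\in A$ decompose $a=Pa+a'$, $b=Pb+b'$, $c=Pc+c'$, where $a'=a-Pa$, $b'=b-Pb$, $c'=c-Pc$ all lie in $M$ because $P^2=P$. Expanding $\{abc\}_A=\{Pa+a',\,Pb+b',\,Pc+c'\}_A$ by multilinearity produces $2^3$ terms: the single ``pure'' term $\{Pa,Pb,Pc\}_A$ together with seven terms each having at least one argument among $a',b',c'\in M$. Since $M$ is a triple ideal, a triple product with an argument in $M$ lies in $M$ (the first and third slots by $\{MAA\}_A\subseteq M$ together with outer symmetry, the middle slot by $\{AMA\}_A\subseteq M$), so $P$ annihilates each of those seven terms. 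Therefore $P\{abc\}_A=P\{Pa,Pb,Pc\}_A=\{Pa,Pb,Pc\}_{P(A)}$, which is the triple homomorphism identity.

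I expect the argument to be essentially routine; the only point requiring care is bookkeeping the multilinear expansion in the converse and checking that the triple-ideal property absorbs a factor from each of the three slots (the third slot being reduced to the first via the symmetry $\{xyz\}_A=\{zyx\}_A$). Notably, the conditional-expectation formulas (\ref{eq:1208164}) are not needed for the equivalence itself; they enter only through the background fact, already recorded above, that (\ref{eq:0602172}) makes $P(A)$ a JC$^*$-triple under the norm of $A$.
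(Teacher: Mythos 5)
Your proof is correct and follows essentially the same route as the paper: the forward direction is identical, and the converse uses the same decomposition $x=Px+(x-Px)$ with the ideal property absorbing every mixed term. The only difference is cosmetic: you expand $\{abc\}_A$ directly by additivity in each slot, whereas the paper first treats the diagonal case $\{xxx\}_A$ and then invokes the polarization identity; your version is marginally more direct but mathematically equivalent.
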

\begin{proof}
Assume (\ref{eq:1208161}), let $a\in \ker P$, and let $b,c\in A$. Then $P\{abc\}_A=\{Pa,Pb,Pc\}_{P(A)}=P\{Pa,Pb,Pc\}_{A}=0$, and similarly, $P\{bac\}_A=0$.

Conversely, suppose $\ker P$ is an ideal.  For $x\in A$, with $x=Px+P'x$, where
$P'=I-P$, we have  (noting that $\{Px,Px,P'x\}=\{P'x,Px,Px\}$)
\[
\{xxx\}_A=\{Px+P'x,Px+P'x,Px+P'x\}=\{Px,Px,Px\}+y,
\]
where $y\in \ker P$.  Thus $P\{xxx\}_A=P\{Px,Px,Px\}_A=\{Px,Px,Px\}_{P(A)}$ and  by the polarization identity,
$$
\{xyz\}=\frac{1}{8}\sum_{\alpha^4=1,\beta^2=1}\alpha\beta\{x+\alpha y+\beta z,x+\alpha y+\beta z,x+\alpha y+\beta z\},
$$
$P$ is a triple homomorphism.
\end{proof}

\begin{proposition}
\label{thm:1208161}
Let $A$ be a JC$^*$-triple
and let $P \colon A \to A$ be a contractive projection.  Then $P$ is a triple homomorphism of $A$ onto $P(A)$ if and only if
$P$ satisfies
\begin{equation}\label{eq:1208162}
\{\ker P,\ker P,\ran P\}_A\subset \ker P,
\end{equation}
\begin{equation}\label{eq:1212161}
\{\ker P,\ran P,\ker P\}_A\subset \ker P,
\end{equation}
and
\begin{equation}\label{eq:1208163}
\norm{P(x)}^3  =  \norm{P\{xxx\}_A} \quad \hbox{for every }x\in A.
\end{equation}
\end{proposition}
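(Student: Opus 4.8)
The plan is to reduce both implications to the kernel criterion of Lemma~\ref{lem:1208161} and to exploit the decomposition $A = \ran P \oplus \ker P$ afforded by the projection $P$, organizing the argument according to the number of kernel-factors appearing in a triple product. Writing $K = \ker P$, $R = \ran P$ and $P' = I - P$ (so that every $a \in A$ splits as $a = Pa + P'a$ with $Pa \in R$ and $P'a \in K$), I would expand $\{abc\}_A$ into the eight products obtained by replacing each of $a, b, c$ by its $R$- and $K$-components. The goal is to show that $P$ annihilates all seven products containing at least one $K$-factor; by the definition (\ref{eq:0602172}) this gives precisely $P\{abc\}_A = P\{Pa, Pb, Pc\}_A = \{Pa, Pb, Pc\}_{P(A)}$, i.e.\ that $P$ is a triple homomorphism onto $P(A)$.

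For the backward implication I would treat the three types of term separately. For a single $K$-factor (the terms $\{P'a, Pb, Pc\}$, $\{Pa, P'b, Pc\}$ and $\{Pa, Pb, P'c\}$) no hypothesis beyond (\ref{eq:1208164}) is needed: substituting $P' = I - P$ and invoking the conditional expectation formulas, together with the outer symmetry $\{xyz\} = \{zyx\}$ to reach the third slot, shows directly that $P$ kills each of them. For two $K$-factors the products lie in $\{K, K, R\}_A$, $\{K, R, K\}_A$ or $\{R, K, K\}_A$; the first two are contained in $\ker P$ by hypotheses (\ref{eq:1208162}) and (\ref{eq:1212161}), and the third reduces to the first by the symmetry of the triple product. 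For the all-kernel term I would first extract from the norm hypothesis (\ref{eq:1208163}) that $x \in \ker P$ forces $\|P\{xxx\}_A\| = \|Px\|^3 = 0$, whence $\{xxx\}_A \in \ker P$; the polarization identity recorded in the proof of Lemma~\ref{lem:1208161} then upgrades this to $\{\ker P, \ker P, \ker P\}_A \subset \ker P$. Combining the three cases yields the homomorphism property.

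The forward implication is the easier direction. If $P$ is a triple homomorphism then $\ker P$ is a triple ideal by Lemma~\ref{lem:1208161}, so $\{\ker P, A, A\}_A \subset \ker P$; since the first slot of each of (\ref{eq:1208162}) and (\ref{eq:1212161}) lies in $\ker P$, both inclusions follow at once. For the norm equality I would use that $P(A)$ is again a JC$^*$-triple under (\ref{eq:0602172}): the homomorphism property gives $P\{xxx\}_A = \{Px, Px, Px\}_{P(A)}$, and the JB$^*$-triple norm identity $\|\{aaa\}\| = \|a\|^3$ applied inside $P(A)$ yields $\|P\{xxx\}_A\| = \|Px\|^3$, which is (\ref{eq:1208163}).

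The main obstacle I anticipate is the all-kernel term in the backward direction, where one must pass from the cube condition $\{xxx\}_A \in \ker P$ to the full inclusion $\{\ker P, \ker P, \ker P\}_A \subset \ker P$; this is the triple-product analogue of the squaring-and-polarization step used in Lemma~\ref{l:|Cexp(x)|^2=|Cexp(x*x)|<=>KernelJordanIdeal}, and the polarization identity displayed in the proof of Lemma~\ref{lem:1208161} makes it routine. The only remaining delicacy is bookkeeping: one must verify that the three instances of (\ref{eq:1208164}) genuinely cover all three single-factor terms, applying the formula in the middle slot and, via $\{xyz\} = \{zyx\}$, in the outer slots.
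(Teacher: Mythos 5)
Your proposal is correct and follows essentially the same route as the paper: the backward direction expands the triple product into components, kills the one-kernel-factor terms with the conditional expectation formulas (\ref{eq:1208164}), the two-kernel-factor terms with (\ref{eq:1208162})--(\ref{eq:1212161}), and the all-kernel term with (\ref{eq:1208163}) plus polarization, while the forward direction uses the triple-ideal property and the identity $\|\{aaa\}\|=\|a\|^3$ in $P(A)$. The only cosmetic difference is that the paper verifies that $\ker P$ is a triple ideal and then invokes Lemma~\ref{lem:1208161}, whereas you prove the homomorphism identity directly for all eight terms of $\{abc\}_A$.
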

\begin{proof}
If $P$ is a triple homomorphism, it is obvious that (\ref{eq:1208162}) and (\ref{eq:1212161}) hold, and if $x\in A$, then $$P\{xxx\}_A=\{Px,Px,Px\}_{P(A)},$$
so that 
$$\|P\{xxx\}_A\|=\|\{Px,Px,Px\}_{P(A)}\|=\|Px\|_{P(A)}^3=\|Px\|_A^3.$$

Conversely, assume (\ref{eq:1208162})-(\ref{eq:1208163}) hold.  We shall show that
$\ker P$ is an ideal, so that  Lemma~\ref{lem:1208161} is applicable.

For $x\in \ker P$ and $y,z\in A$, it is required to show
that $P\{xyz\}_A=0$ and $P\{yxz\}_A=0$.
Write $y=Py+P'y$, and $z=Pz+P'z$.  Then
\begin{eqnarray*}
\{xyz\}_A&=&\{P'x,Py+P'y,Pz+P'z\}\\
&=&\{P'x,Py,Pz\}+\{P'x,P'y,Pz\}\\
&+&\{P'x,Py,P'z\}+\{P'x,P'y,P'z\}.
\end{eqnarray*}
By (\ref{eq:1208163}), $\ker P$ is closed under $x\mapsto \{xxx\}_A$, so by the 
polarization identity, it is a subtriple of $A$, and therefore $P\{P'x,P'y,P'z\}=0$. By (\ref{eq:1208162}) and (\ref{eq:1212161}), 
$  P(\{P'x,P'y,Pz\}+\{P'x,Py,P'z\})=0$.  By  (\ref{eq:1208164}), $P\{P'x,Py,Pz\}=0$.  Thus $P\{xyz\}_A=0$ and a similar proof shows  $P\{yxz\}_A=0$. 
 \end{proof}

As noted in Remark~\ref{rem:0602171}, the technique mentioned there  can be used to show the following two results, which are responses to a question posed to the authors independently by C. Akemann and by  the referee.

A {\it JC$^*$-algebra} is a norm closed subspace $A$ of a C$^*$-algebra which is closed under the Jordan  product $x\circ y:=(xy+yx)/2$ and the involution.  A {\it Jordan homomorphism} is a linear mapping $T:A\rightarrow B$ between two JC$^*$-algebras which preserves the Jordan product: $T(x\circ y)=Tx\circ Ty$,  equivalently, $T(a^2)=T(a)^2$ for all $a=a^*$.

\begin{proposition}\label{prop:0603171}
Let $A$ be a $C^*$-algebra
and let $\cexp \colon A \to A$ be a conditional expectation.
Then $\cexp$ is a Jordan homomorphism if and only if
\begin{equation}\label{eq:0603171}
\norm{\cexp(x)}^2  =  \norm{\cexp(x^2)} \quad \mbox{for every $x=x^*$ in $A$.}
\end{equation}
\end{proposition}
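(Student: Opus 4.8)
The plan is to reuse the ``multiplicative domain'' device described in Remark~\ref{rem:0602171}, but specialized to self-adjoint elements so that everything stays inside the Jordan (symmetric) structure and no appeal to complete positivity is needed. The forward direction is immediate: if $\cexp$ is a Jordan homomorphism and $x=x^*$, then $\cexp(x^2)=\cexp(x)^2$; since $\cexp$ is $*$-preserving, the element $\cexp(x)$ is self-adjoint, and for any self-adjoint element $b$ of a $C^*$-algebra one has $\norm{b^2}=\norm{b}^2$, so $\norm{\cexp(x^2)}=\norm{\cexp(x)^2}=\norm{\cexp(x)}^2$, which is (\ref{eq:0603171}).

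For the reverse direction I would fix a self-adjoint $a\in A$ and set $x=a-\cexp(a)$. Because $\cexp$ is $*$-preserving and idempotent, $x$ is self-adjoint and $\cexp(x)=0$, so $x\in\ker\cexp$. Applying the hypothesis (\ref{eq:0603171}) to $x$ then forces $\norm{\cexp(x^2)}=\norm{\cexp(x)}^2=0$, hence $\cexp(x^2)=0$.

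The heart of the argument is the expansion $x^2=a^2-a\cexp(a)-\cexp(a)a+\cexp(a)^2$. Applying $\cexp$ and using the bimodule identities $\cexp(a\cexp(a))=\cexp(a)\cexp(a)$ and $\cexp(\cexp(a)a)=\cexp(a)\cexp(a)$ recorded in Section~2, the three rightmost terms collapse to a single $\cexp(a)^2$, so that $\cexp(x^2)=\cexp(a^2)-\cexp(a)^2$. Combined with $\cexp(x^2)=0$, this yields $\cexp(a^2)=\cexp(a)^2$ for every self-adjoint $a$, which is exactly the defining property of a Jordan homomorphism.

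I do not anticipate a genuine obstacle here: the self-adjointness of $a$ is what makes both halves of the equivalence go through, ensuring that $x$ and $\cexp(a)$ are self-adjoint (so the norm identity $\norm{b^2}=\norm{b}^2$ applies) and that the cross terms cancel. The only point requiring care is the verification that $\cexp(a\cexp(a))=\cexp(\cexp(a)a)=\cexp(a)^2$, but this is precisely the statement that $\cexp$ is a bimodule map over its range, so the computation is bookkeeping rather than a new idea. In particular, unlike the route through \cite[Theorem 3.1]{Choi74}, the argument avoids both complete positivity and the notion of multiplicative domain, which makes it the natural template to adapt to the triple and Jordan settings treated elsewhere in this section.
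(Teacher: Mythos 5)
Your proposal is correct and is essentially identical to the paper's own proof: the forward direction uses $\cexp(a^2)=\cexp(a)^2$ together with $\norm{b^2}=\norm{b}^2$ for self-adjoint $b$, and the converse sets $x=a-\cexp(a)\in\ker\cexp$, deduces $\cexp(x^2)=0$ from the norm hypothesis, and expands $x^2$ using the bimodule property of $\cexp$ over its range to obtain $\cexp(a^2)=\cexp(a)^2$. No differences of substance.
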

\begin{proof}
If $\cexp$ is a Jordan homomorphism, then $\cexp(a^2)=\cexp(a)^2$ so (\ref{eq:0603171}) holds.
Conversely, if $a=a^*\in A$, then $x=x^*=a-\cexp(a)\in \ker\cexp$, and
\[
0=\cexp(x^2)=\cexp(a^2-a\cexp(a)-\cexp(a)a+\cexp(a)^2)
=\cexp(a^2)-\cexp(a)^2,
\]
so $\cexp$ is a Jordan homomorphism.
\end{proof}

Let $A$ be a unital JC$^*$-algebra, with Jordan product  denoted $a\circ b$, and let $P:A\rightarrow A$ be a nonzero positive unital
projection.  The conditional expectation formulas (\ref{eq:1208164}) reduce to
\begin{equation}\label{eq:0602171}
P(x\circ Py)=P(Px\circ Py),
\end{equation}
and by (\ref{eq:0602172}), $P(A)$ is isometric to a JC$^*$-algebra under the norm of $A$ and the  Jordan product $(a,b)\mapsto a*b:=P(a\circ b)$, for $a,b\in P(A)$ (see \cite[Theorem 1.4]{EffSto79} for
the original proof of the latter statement and \cite[Lemma 1.1]{EffSto79} for the original proof of (\ref{eq:0602171})).  Note that $P(a)*P(a)=P(P(a)^2)$.

\begin{proposition}\label{prop:0603172}
Let $A$ be a unital $JC^*$-algebra
and let $P:A \to A$ be a positive unital projection.
Then $P$ is a Jordan homomorphism, that is, $P(a^2)=P(P(a)^2)$ if and only if
\begin{equation}\label{eq:0603172}
\norm{Px}^2  =  \norm{P(x^2)} \quad \mbox{for every $x=x^*$ in $A$.}
\end{equation}
\end{proposition}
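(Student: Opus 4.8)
The plan is to mirror the proof of Proposition~\ref{prop:0603171}, replacing the associative $C^*$-multiplicative structure by the Jordan structure and substituting the bimodule property used there by the Jordan conditional expectation formula (\ref{eq:0602171}), together with the fact (recorded just before the statement) that $(P(A),*)$ is isometric to a JB$^*$-algebra under the norm inherited from $A$, with $P(a)*P(a)=P(P(a)^2)$.

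For the forward direction, suppose $P$ is a Jordan homomorphism, i.e. $P(a^2)=P(P(a)^2)$ for every self-adjoint $a$, and fix $x=x^*$. Since $P$ is positive it is $*$-preserving, so $Px$ is a self-adjoint element of the JB$^*$-algebra $(P(A),*)$. Then $P(x^2)=P(P(x)^2)=Px*Px$, and the JB$^*$-norm identity $\norm{b*b}_{P(A)}=\norm{b}_{P(A)}^2$ for self-adjoint $b$, combined with the fact that the norm of $P(A)$ is the norm of $A$, gives $\norm{P(x^2)}=\norm{Px*Px}_{P(A)}=\norm{Px}_{P(A)}^2=\norm{Px}^2$. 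This is (\ref{eq:0603172}).

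The reverse direction is where the substance lies, and it runs exactly along the lines indicated in Remark~\ref{rem:0602171}. Assume (\ref{eq:0603172}) and fix a self-adjoint $a$. Put $x:=a-P(a)$. Then $x=x^*$ by positivity of $P$, and $Px=Pa-P^2a=0$, so $x\in\ker P$; hence $\norm{P(x^2)}=\norm{Px}^2=0$, that is, $P(x^2)=0$. Expanding the Jordan square inside the ambient $C^*$-algebra gives $x^2=a^2-2(a\circ Pa)+(Pa)^2$, where each of $a^2$, $a\circ Pa$, and $(Pa)^2$ lies in $A$ since $A$ is closed under $\circ$. Applying $P$ and evaluating the cross term by the conditional expectation formula (\ref{eq:0602171}) with $y=a$, namely $P(a\circ Pa)=P(Pa\circ Pa)=P((Pa)^2)$, we obtain $0=P(x^2)=P(a^2)-P((Pa)^2)$. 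Thus $P(a^2)=P(P(a)^2)$ for every self-adjoint $a$, which by the stated equivalence in the definition of Jordan homomorphism means $P$ is a Jordan homomorphism.

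I expect the main obstacle to be the cross term $P(a\circ Pa)$ in the reverse direction: it is precisely here that the Jordan conditional expectation identity (\ref{eq:0602171}) must be invoked, playing the role that the bimodule identity $\cexp(a\cexp(a))=\cexp(a)^2$ plays in the $C^*$-case of Proposition~\ref{prop:0603171}. The remaining care needed is bookkeeping: checking that the relevant Jordan squares stay in $A$, that $x$ is self-adjoint so that the hypothesis (\ref{eq:0603172}) applies to it, and, in the forward direction, citing the JB$^*$-norm identity $\norm{b*b}=\norm{b}^2$ on the range JB$^*$-algebra $(P(A),*)$.
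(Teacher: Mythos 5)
Your proof is correct, and the substantive (reverse) implication is exactly the paper's argument: set $x=a-P(a)\in\ker P$, deduce $P(x^2)=0$ from (\ref{eq:0603172}), expand the Jordan square, and collapse the cross term via the conditional expectation formula (\ref{eq:0602171}). The only divergence is in the easy forward direction: the paper obtains $\norm{P(a^2)}\le\norm{P(a)}^2$ from $P(a^2)=P(P(a)^2)$ and contractivity, and the reverse inequality from the Robertson--Youngson positivity inequality $P(a^2)\ge P(a)^2$, whereas you invoke the JB$^*$-norm identity $\norm{b*b}=\norm{b}^2$ for self-adjoint $b$ in the range algebra $(P(A),*)$ --- both routes are legitimate, yours trading the positivity inequality for the Effros--St\o rmer/Friedman--Russo structure of $P(A)$, which the paper has already set up in any case.
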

\begin{proof}
If $P$ is a Jordan homomorphism, 
so that $P(a^2)=P(P(a)^2)$, then $\|P(a^2)\|\le\|P(a)^2\|=\|P(a)\|^2$.  However, since 
$P$ is positive, $P(a^2)\ge P(a)^2$ (\cite[Theorem 1.2]{RobYou82}), so that (\ref{eq:0603172}) holds.

Conversely, if $a=a^*\in A$, then $x=x^*=a-\cexp(a)\in \ker\cexp$, and
\begin{eqnarray*}
0&=&P(x^2)=P(a^2-P(a)a-aP(a)+P(a)^2)\\
&=&P(a^2)-2P(P(a)\circ a)+P(P(a)^2)\\
&=&P(a^2)-P(P(a)^2)\hbox{ (by (\ref{eq:0602171}))},
\end{eqnarray*}
so $P$ is a Jordan homomorphism.
\end{proof}

\section{Homomorphic conditional expectations on $C_0(X)$}
\label{s:HomomorphicConditionalExpectationsOnC}

This section is based on~\cite[5.1]{Pluta2013}.
We  discuss the relationship between
homomorphic conditional expectations on commutative $C^*$-algebras $C_0(X)$
and retractions on $X$, for compact and locally compact Hausdorff spaces $X$.
When we deal specifically with a compact Hausdorff space we usually use $K$ in place of $X$.

If $K$ is a compact Hausdorff space,
we use $C(K)$ to denote the unital $C^*$-algebra
(with pointwise operations and the supremum norm)
of all complex-valued continuous functions~on~$K$.
If $X$ is a locally compact Hausdorff space,
we use $C_0(X)$ to denote the $C^*$-algebra
of all complex-valued continuous functions on $X$ which vanish at infinity.
If $K$ is compact, then $C_0(K) = C(K)$.

\begin{example}
Retractions  $\tau \colon K \to K$ on a (locally) compact Hausdorff  space $K$
give rise to homomorphic conditional expectations $\cexp_\tau \colon C(K) \to C(K)$ via $\cexp_\tau(f) = f \circ \tau$.
But there are expectations on $C(K)$ which do not come from any retraction of $K$
(those expectations are not homomorphic).
For instance,
let $K = \{ e^{i \theta} : 0 \leq \theta \leq 2 \pi\}$ and
define
\[
\cexp \colon C(K) \to C(K) \mbox{ by }  \cexp(f)(\zeta) = \frac{f (\zeta) +
f(-\zeta)}2.
\]
Then $\cexp$ is a   (not homomorphic)  conditional expectation  on $C(K)$
and there is no retraction  $\tau \colon K \to K$  with  $\cexp  = \cexp_\tau$;
see~\cite[Proposition 5.1.6]{Pluta2013}.
\end{example}

\begin{theorem}
\label{t:Retraction=HomomorphicExpectation}
Let $K$ be a compact Hausdorff space.
If $\tau\colon K \to K$ is a retraction (i.e., a continuous function  with $\tau \circ \tau = \tau$),
then the map
\begin{equation}
\label{e:CexpTau}
\cexp_\tau \colon C(K) \to C(K), \qquad \cexp_\tau(f) = f\circ \tau,
\qquad \mbox{for every $f$ in $C(K)$}
\end{equation}
is a unital homomorphic conditional expectation.

Conversely, if $\cexp \colon C(K) \to C(K)$ is  a unital  homomorphic conditional expectation,
then there is a  retraction $\tau\colon K \to K$ such that $\cexp = \cexp_\tau$,
where $\cexp_\tau$  is given by formula~(\ref{e:CexpTau}).
\end{theorem}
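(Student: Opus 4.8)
The plan is to prove both directions, with the forward direction being routine and the converse requiring Gelfand-type duality arguments. For the forward direction, given a retraction $\tau$, I would verify directly that $\cexp_\tau(f) = f \circ \tau$ is a conditional expectation: linearity and positivity are immediate since composition with a continuous map preserves these; the idempotent property $\cexp_\tau^2 = \cexp_\tau$ follows from $\tau \circ \tau = \tau$, since $(f \circ \tau) \circ \tau = f \circ (\tau \circ \tau) = f \circ \tau$. It is unital because $1 \circ \tau = 1$. Homomorphicity is clear since composition respects pointwise multiplication: $(fg) \circ \tau = (f \circ \tau)(g \circ \tau)$. This already gives the defining bimodule identity $\cexp_\tau(\cexp_\tau(f)g) = \cexp_\tau(f)\cexp_\tau(g)$ as a special case.

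For the converse, the key idea is to realize that a unital homomorphic conditional expectation $\cexp \colon C(K) \to C(K)$ is in particular a unital $*$-homomorphism of $C(K)$ into itself, and then invoke Gelfand duality. My plan is to use the universal property that every unital $*$-homomorphism $C(K) \to C(K)$ is induced by a continuous map $\tau \colon K \to K$. Concretely, for each point $\zeta \in K$, the evaluation functional $f \mapsto \cexp(f)(\zeta)$ is a (possibly zero, but here nonzero by unitality) character of $C(K)$, hence equals evaluation at a unique point $\tau(\zeta) \in K$; this defines $\tau$ by the requirement $\cexp(f)(\zeta) = f(\tau(\zeta))$, i.e.\ $\cexp(f) = f \circ \tau$. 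Continuity of $\tau$ follows from the weak-$*$ continuity of $\zeta \mapsto \cexp(f)(\zeta)$ for every $f$ together with the fact that the topology on $K$ is the weak topology induced by $C(K)$.

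It then remains to verify that $\tau$ is a retraction, i.e.\ $\tau \circ \tau = \tau$. This is exactly where the idempotency $\cexp^2 = \cexp$ is used: from $\cexp(\cexp(f)) = \cexp(f)$ we get $f \circ \tau \circ \tau = f \circ \tau$ for all $f \in C(K)$, and since $C(K)$ separates points of $K$, this forces $\tau \circ \tau = \tau$ pointwise. Thus $\tau$ is a continuous retraction with $\cexp = \cexp_\tau$.

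The main obstacle, or at least the point requiring the most care, is the recovery of the map $\tau$ and the justification of its continuity. The homomorphic hypothesis is essential here: without it, the induced map $f \mapsto \cexp(f)(\zeta)$ need not be multiplicative, so it need not be a character and the evaluation-at-a-point representation fails (this is precisely what the antipodal-averaging example illustrates). So the delicate step is confirming that multiplicativity of $\cexp$ descends to multiplicativity of each evaluation functional $f \mapsto \cexp(f)(\zeta)$, which is immediate from $\cexp(fg) = \cexp(f)\cexp(g)$ evaluated at $\zeta$, and that these functionals are nonzero, which follows from $\cexp(1) = 1$ since $\cexp$ is unital. Once multiplicativity and nonvanishing are in hand, the Gelfand correspondence between characters of $C(K)$ and points of $K$ does the rest.
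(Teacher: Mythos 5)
Your proof is correct, and in the converse direction it takes a genuinely more direct route than the paper. The paper first uses the homomorphic hypothesis to see that $\ker\cexp$ is a closed ideal, identifies it with the functions vanishing on a closed set $K_1\subseteq K$, passes to the quotient isomorphism $C(K)/\ker\cexp\cong C(K_1)$, and only then applies Gelfand duality to the resulting unital homomorphism $C(K_1)\to C(K)$ to produce a continuous map $\phi\colon K\to K_1$, which is recast as $\tau\colon K\to K$. You bypass the ideal--hull--quotient machinery entirely: since $\cexp$ is unital and multiplicative, each functional $f\mapsto\cexp(f)(\zeta)$ is a nonzero character of $C(K)$, hence evaluation at a unique point $\tau(\zeta)$, and continuity of $\tau$ follows because the topology of a compact Hausdorff space is the initial topology induced by $C(K)$ while each $f\circ\tau=\cexp(f)$ is continuous. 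Both arguments rest on the same Gelfand correspondence between characters and points, and both finish identically, deducing $\tau\circ\tau=\tau$ from $\cexp^2=\cexp$ and the fact that $C(K)$ separates points. What your version buys is economy: you need neither the description of closed ideals of $C(K)$ nor the quotient identification. What the paper's version buys is extra structural information for free, namely that the range of $\cexp$ is canonically $C(K_1)$ where $K_1$ is the image of the retraction, which is the picture exploited in the subsequent corollaries for the nonunital and locally compact cases. Your treatment of the forward direction matches the paper's (which simply calls it a straightforward verification) and is complete.
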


\begin{proof}
The first implication is a straightforward verification.
For the second implication,
let $\cexp \colon C(K) \to C(K)$ be a~unital homomorphic conditional expectation. Then the kernel of $\cexp$,  which  will be denoted by  $\ker \cexp$,
is a closed ideal and hence there is a closed set  $K_1 \subseteq K$
such that
$\ker \cexp = \set{f \in C(K) \,:\,   f\mid_{K_1}  =  0}$;
(see, for example, ~\cite[Theorem 4.2.4]{Rickart60} or \cite[Theorem 85]{Stone1937}).
If we let $S$ denote the range of $\cexp$,
then $S$ is a closed subalgebra of $C(K)$ (containing the constants)
and $\cexp$ induces an algebra isomorphism
$$\tilde{\cexp} \colon  C(K)/\ker\cexp \to S,  \qquad  \tilde{\cexp}(f+ \ker\cexp) = \cexp(f),
\qquad \mbox{for every $f$ in $C(K)$}.$$
We also have an isomorphism
$$\pi\colon  C(K)/\ker\cexp \to C(K_1), \qquad  \pi(f+ \ker\cexp) =  f\mid_{K_1},
\qquad \mbox{for every $f$ in $C(K)$};$$
(\cite[Theorem 4.2.4]{Rickart60}, or  \cite[Theorem 85]{Stone1937}).
Now $\tilde{\cexp} \circ \pi^{-1} \colon C(K_1) \to S  \subseteq C(K)$  is a unital algebra homomorphism
and so there exists a continuous function   $\phi \colon K \to K_1$
such that $(\tilde{\cexp} \circ \pi^{-1})(h) =  h \circ \phi$,
for $h\in C(K_1)$;  see  \cite[II.2.2.5]{Blackadar2006}.
Let $\tau \colon K \to K$ be given by $\tau(t) = \phi(t)$, for $t\in K$,
so that $\tau$ has the same values as $\phi$
but with a different co-domain.
Note that $\tau$ is continuous (since $\phi$ is).
We claim that $\cexp(f) = f \circ \tau$  for all  $f \in  C(K)$.
Indeed, if  $f \in  C(K)$, then $\pi(f + \ker\cexp) =  f\mid_{K_1}$  thus
$ f + \ker\cexp =  \pi^{-1}(f\mid_{K_1})$,
and this implies that  $(\tilde{\cexp} \circ \pi^{-1})(f\mid_{K_1}) = \cexp(f)$.
But also $(\tilde{\cexp} \circ \pi^{-1})(f\mid_{K_1}) =  (f\mid_{K_1}) \circ \phi = f \circ \tau$.
Hence we have $\cexp(f) = f \circ \tau$  for all  $f \in  C(K)$,  as claimed.
Since $\cexp(\cexp(f)) = \cexp(f)$  we must have $f \circ \tau \circ \tau = f \circ \tau$
for each function  $f \in  C(K)$.
Since the functions in $C(K)$ separate the points of $K$, it follows that $\tau \circ \tau = \tau$
so that $\tau$ is a retraction.
\end{proof}

\begin{corollary}
\label{c:E-gives-retraction-compact-case}
Let $K$ be a compact Hausdorff space and let $\cexp \colon C(K) \to C(K)$
be a  homomorphic conditional expectation.
Then there is a
clopen  set $L \subseteq K$
and a retraction  $\tau \colon L \to L$ such that $\cexp$ is given by
\[
\cexp(f)(t) =
\begin{cases}
f (\tau (t)) & \mbox{ if } t \in L\\
0 & \mbox{ for } t \in K \setminus L
\end{cases}
\]
for $f \in C(K)$, $t \in K$.
\end{corollary}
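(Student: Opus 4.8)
The plan is to reduce to the unital case, Theorem~\ref{t:Retraction=HomomorphicExpectation}, by cutting $\cexp$ down to the corner determined by the projection $e=\cexp(1)$. First I would note (as recorded in Section~2) that $e=\cexp(1)$ is a projection in $C(K)$, hence a continuous $\{0,1\}$-valued function, so that its support $L=\{t\in K:e(t)=1\}$ is a clopen subset of $K$ and $e=\chi_L$. Since $e$ is the identity element of the range $S=\cexp(C(K))$ and $\cexp(f)\in S$, we have $\cexp(f)=e\,\cexp(f)$, so every $\cexp(f)$ vanishes on $K\setminus L$; this already produces the lower branch of the asserted formula.

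The bridge to the corner is the identity $\cexp(f)=\cexp(ef)$ for all $f\in C(K)$, which follows from the bimodule property $\cexp(\cexp(x)y)=\cexp(x)\cexp(y)$ applied with $x=1$, together with $e\,\cexp(f)=\cexp(f)$. Because $L$ is clopen, extension by zero $\iota\colon C(L)\to eC(K)$ and restriction $r\colon eC(K)\to C(L)$, $r(g)=g\mid_L$, are mutually inverse $*$-isomorphisms. I would then verify that $\widetilde{\cexp}:=r\circ\cexp\circ\iota\colon C(L)\to C(L)$ is a \emph{unital} homomorphic conditional expectation: it maps into itself because $\cexp(eC(K))\subseteq S\subseteq eC(K)$; idempotence follows from $\iota r=\mathrm{id}_{eC(K)}$ and $\cexp^2=\cexp$; it is unital since $\widetilde{\cexp}(1_L)=r\,\cexp(e)=r(e)=1_L$ (using $\cexp(e)=e$); and multiplicativity is inherited because $r$ and $\iota$ are homomorphisms.

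Applying Theorem~\ref{t:Retraction=HomomorphicExpectation} to the compact space $L$ then yields a retraction $\tau\colon L\to L$ with $\widetilde{\cexp}(h)=h\circ\tau$ for every $h\in C(L)$. It remains to unwind the identifications. For $f\in C(K)$ the bridging identity gives $\cexp(f)=\cexp(ef)=\iota\bigl(\widetilde{\cexp}(r(ef))\bigr)$, and since $e\equiv1$ on $L$ we have $r(ef)=f\mid_L$. Thus for $t\in L$ we get $\cexp(f)(t)=\bigl((f\mid_L)\circ\tau\bigr)(t)=f(\tau(t))$, while $\cexp(f)(t)=0$ for $t\in K\setminus L$, which is exactly the claimed description.

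I expect the main difficulty to be organizational rather than conceptual: one must check cleanly that $\cexp$ restricts to a \emph{unital} homomorphic conditional expectation on the corner $eC(K)$ and that this corner is canonically $C(L)$ through the clopen set $L$. The single load-bearing computation is $\cexp(f)=\cexp(ef)$; this is what allows the behavior of $\cexp$ on all of $C(K)$ to be recovered from its restriction to the unital corner, after which the result follows from the already-established compact unital case.
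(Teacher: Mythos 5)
Your proposal is correct and follows essentially the same route as the paper: extract the clopen set $L$ from the projection $\cexp(1_K)=1_L$, reduce to a unital homomorphic conditional expectation on $C(L)$, and invoke Theorem~\ref{t:Retraction=HomomorphicExpectation}. The only (harmless) difference is that you derive the key identity $\cexp(f)=\cexp(1_Lf)$ from the bimodule property with $x=1$, whereas the paper gets it from $1_K-1_L\in\ker\cexp$ together with the fact that the kernel is an ideal.
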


\begin{proof}
Let $1_K$ denote the constant function  1  in $C(K)$.
Then
$\cexp(1_K)^2  =  \cexp(1_K)$ and so there is
$L = \{t \in K : \cexp(1_K)(t) = 1 \}$ so that $\cexp(1_K) = 1_L$
(the characteristic function of $L$).
Moreover, since $1_L \in C(K)$, $L \subseteq K$ is a clopen subset.
Since $\cexp(1_K - 1_L) = \cexp(1_K) - \cexp(1_L) = \cexp(1_K)-\cexp(\cexp(1_K))=0$,
we have that  $1_K - 1_L \in \ker \cexp$ (which is an ideal).
So if
$f \in C(K)$, then \begin{equation}\label{eq:0131171}
\cexp(f) = \cexp( 1_L f + (1_K - 1_L)f) =
\cexp( 1_L f) = 1_L \cexp(1_L f).
\end{equation}

Identifying $C(L)$ with $\{ f \in C(K) : f = 1_L f \}$
via $g \in C(L) \mapsto \tilde{g} \in C(K)$
(where $\tilde{g}\mid_L = g$ and $\tilde{g}(t) = 0$ for $t \in K \setminus L$),
we see that
$\cexp$ induces a homomorphic unital  conditional expectation
$\cexp_L \colon C(L) \to C(L)$ by $\cexp_L ( g) = \cexp(\tilde{g}) \mid_L$. The result follows by
applying Theorem~\ref{t:Retraction=HomomorphicExpectation} to $\cexp_L$ and using (\ref{eq:0131171}).
\end{proof}

Let $X$ be a locally compact Hausdorff space and
$X^* = X \cup \{\omega\}$ the one point compactification. We use this notation here
even when $X$ is already compact, in which case $\{ \omega \}$ is open (and closed) in $X^*$.
Subsets $U$ of $X^*$ are open if $U \cap X$ is open in $X$
and if $\omega \in U$ we insist that $X^* \setminus U$ be a compact subset of $X$.

We consider $C_0(X)$ as embedded in $C(X^*)$ via
\[
f \mapsto \tilde{f} \colon C_0(X) \to C(X^*),
\]
where
\[
\tilde{f}(t) = \begin{cases}
f(t) & \mbox{ if } t \in X\\
0 & \mbox{ for } t =\omega.
\end{cases}
\]
Note that this identifies $C_0(X)$ with $\{ g \in C(X^*) :
g(\omega) = 0 \}$  (the maximal ideal of $C(X^*)$ consisting of
functions which take the value zero  at $\omega$)
and $f \mapsto \tilde{f}$ is a *-algebra isomorphism
onto its range.

If $\tau \colon X^* \to X^*$ is a retraction  such that $\tau(\omega) =
\omega$, then we can define a conditional expectation
$
\cexp_{\tau, *} \colon C_0(X) \to C_0(X)
$
by
$
\cexp_{\tau, *}(f) = (\tilde{f} \circ \tau) |_X
$.

\begin{corollary}
\label{c:yes-locally-comp-case}
If $X$ is a locally compact Hausdorff space and $\cexp \colon C_0(X) \to C_0(X)$
is a homomorphic conditional expectation,
then there is a retraction  $\tau \colon X^* \to X^*$  ($X^* = X \cup \set{\omega}$)  with
$\tau(\omega) = \omega$ such that $\cexp = \cexp_{\tau, *} $.
\end{corollary}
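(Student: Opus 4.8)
The plan is to reduce to the compact case of Theorem~\ref{t:Retraction=HomomorphicExpectation} by extending $\cexp$ to a \emph{unital} homomorphic conditional expectation on $C(X^*)$. Recall that $C_0(X)$ is identified with the maximal ideal $\set{g \in C(X^*) : g(\omega)=0}$, so that every $g\in C(X^*)$ admits the unique decomposition $g = g(\omega)\,1 + h$ with $h := g - g(\omega)\,1 \in C_0(X)$. I would define $\hat\cexp \colon C(X^*)\to C(X^*)$ by $\hat\cexp(g) = g(\omega)\,1 + \cexp(h)$, where $\cexp(h)$ is viewed inside $C(X^*)$ via the embedding $f\mapsto\tilde f$.

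First I would check that $\hat\cexp$ is a unital homomorphic conditional expectation. Unitality is immediate from $\hat\cexp(1)=1$. Because $\cexp(h)$ vanishes at $\omega$, one has $\hat\cexp(g)(\omega)=g(\omega)$, and hence the idempotence $\hat\cexp^2=\hat\cexp$ falls out of $\cexp^2=\cexp$. Writing $g_i = \lambda_i 1 + h_i$ (with $\lambda_i = g_i(\omega)$ and $h_i\in C_0(X)$) and expanding both sides, the multiplicativity $\hat\cexp(g_1 g_2)=\hat\cexp(g_1)\hat\cexp(g_2)$ reduces, after the terms linear in the $\lambda_i$ cancel, to the single identity $\cexp(h_1 h_2)=\cexp(h_1)\cexp(h_2)$, which holds because $\cexp$ is homomorphic. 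Since $\cexp$ is positive it is $*$-preserving, which makes $\hat\cexp$ $*$-preserving as well; being a unital, multiplicative, $*$-preserving linear map on a commutative $C^*$-algebra, $\hat\cexp$ is a unital $*$-homomorphism and therefore positive. Positivity together with idempotence and multiplicativity yields the bimodule identity $\hat\cexp(\hat\cexp(g_1)g_2)=\hat\cexp(g_1)\hat\cexp(g_2)$, so $\hat\cexp$ is a homomorphic conditional expectation in the sense of Definition~\ref{d:ConditionalExpectation}.

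With $\hat\cexp$ in hand, Theorem~\ref{t:Retraction=HomomorphicExpectation} supplies a retraction $\tau\colon X^*\to X^*$ with $\hat\cexp(g)=g\circ\tau$ for all $g\in C(X^*)$. Evaluating at $\omega$ gives $g(\tau(\omega)) = \hat\cexp(g)(\omega)=g(\omega)$ for every $g$; since $C(X^*)$ separates the points of $X^*$, this forces $\tau(\omega)=\omega$. Finally, for $f\in C_0(X)$ one has $\tilde f(\omega)=0$, so $\hat\cexp(\tilde f)=\cexp(f)$; on the other hand $\hat\cexp(\tilde f)=\tilde f\circ\tau$, and restricting to $X$ yields $\cexp(f)=(\tilde f\circ\tau)|_X=\cexp_{\tau, *}(f)$, as required.

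I expect the only real content to lie in the construction and verification of the unital extension $\hat\cexp$; the delicate point is positivity, but it is handled cleanly by the observation that a unital multiplicative $*$-preserving map on a commutative $C^*$-algebra is automatically a $*$-homomorphism, hence positive. Everything after that is bookkeeping with the one-point compactification and the embedding $f\mapsto\tilde f$.
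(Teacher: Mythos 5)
Your proposal is correct and takes essentially the same approach as the paper: both pass to the one-point compactification by extending $\cexp$ to the unitisation $C(X^*)\cong C_0(X)^{\sharp}$ (your $\hat{\cexp}$ is precisely the paper's $\cexp^{\sharp}$) and then invoke the compact unital case. The only real difference is a streamlining: you observe that the extension is unital and apply Theorem~\ref{t:Retraction=HomomorphicExpectation} directly and uniformly in $X$, whereas the paper treats the compact and non-compact cases separately and routes both through Corollary~\ref{c:E-gives-retraction-compact-case}.
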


\begin{proof}
First consider the case when $X$ is compact.
We apply Corollary~\ref{c:E-gives-retraction-compact-case} above
to get $L \subseteq X$ compact and clopen and $\tau \colon L \to L$ a retraction  with
\[
\cexp(f)(t) = \begin{cases}
f(\tau(t)) & \mbox{ if } t \in L\\
0 & \mbox{ if }t \in X \setminus L.
\end{cases}
\]
Define a retraction  $\rho \colon X^* \to X^*$ by $\rho(t) = \tau(t)$ for
$t \in L$ and $\rho(t) = \omega$ for
$t \in (X \setminus L) \cup \{\omega\}$. Since
$L$ is clopen and so is $\{\omega\}$, $\rho$ is continuous. We can
verify that $\rho \circ \rho = \rho $ and $\cexp = \cexp_{\rho, *}$.

In the case that $X$ is not compact,
note that $C(X^*)$ is isomorphic as a *-algebra to the unitisation
$C_0(X)^{^\sharp}$,
where $C_0(X)^{^\sharp}$ is defined as in~\cite[Definition 1.3.3]{Dales2000}.
The isomorphism is
given by $g \mapsto  \phi(g) := (g|_X - g(\omega), g(\omega))$.
Indeed, if $\phi(g_1) = \phi(g_2)$, then $g_1(\omega) = g_2(\omega)$ and $g_1|_X = g_2|_X$,
thus $g_1 = g_2$.
On the other hand, if $(h, \alpha) \in C_0(X)^{^\sharp}$, then $\phi(g) = (h, \alpha)$, where

\[
g(x) =
\begin{cases}
h(x) + \alpha & \mbox{ if } x \in X\\
\alpha  & \mbox{ if } x = \omega.
\end{cases}
\]

Regard
$\cexp^{^\sharp} \colon C_0(X) \oplus \IC\to C_0(X) \oplus \IC$ as
$\cexp^{^\sharp}  \colon C(X^*) \to C(X^*)$,
where $\cexp^{^\sharp}(h, \alpha)  =  (\cexp(h), \alpha)$ for $h\in C_0(X)$, $\alpha \in \IC$.
Then $\cexp^{^\sharp}$ is an~algebra homomorphism and a~conditional expectation.

We apply Corollary~\ref{c:E-gives-retraction-compact-case}
 to get $L \subset X^*$ clopen
and $\tau \colon L \to L$ a retraction  so that

\begin{equation}
\label{e:Lam-exp-c2}
\cexp^{^\sharp}  (f)(t) =
\begin{cases}
f(\tau(t)) & \mbox{ if } t \in L\\
0 & \mbox{ if }t \in X^* \setminus L,
\end{cases}
\end{equation}
for $f\in C(X^*)$.
Since
$C_0(X)$ can be identified with the maximal ideal of $C(X^*)$ consisting of
functions which take the value zero  at $\omega$, i.e., $C_0(X) = \{ g \in C(X^*) : g(\omega) = 0\}$,
we have
$$\cexp^{^\sharp} (C_0(X)) = \cexp^{^\sharp} ( \{ g \in C(X^*) : g(\omega) = 0\})
\subset C_0(X),$$
and, therefore, if $\omega \in L$, then $\tau(\omega) = \omega$.
(Indeed, if $\omega \in L$ and
$\tau(\omega) = t \in X$, there is $f \in C_0(X)$ with $f(t) = 1$
and
 we would have a contradiction from $0 = \cexp^{^\sharp}(f)(\omega) =
\tilde{f}(\tau(\omega)) = f(t) \neq 0$).
If $\omega \notin L$, then  $\omega \in X^* \setminus L$, $L\subseteq X$ is compact,
and $\cexp^{^\sharp}$ is given by~(\ref{e:Lam-exp-c2}).

Thus we can extend $\tau$ to a retraction  $\rho \colon X^* \to X^*$
by $\rho(t) = \tau(t)$ for
$t \in L$ and $\rho(t) = \omega$ for $t \in X^* \setminus L$. Since
$L$ is clopen, $\rho$ is continuous, and we can
verify that $\rho \circ \rho = \rho $ and $\cexp = \cexp_{\rho, *}$.
\end{proof}

\noindent {\bf Acknowledgment.} This paper was begun by the first named author at  Trinity College Dublin in coordination with Richard Timoney, to whom he now expresses his thanks.


\begin{bibdiv}
\begin{biblist}

\bib{BartonTimoney1986}{article}{
   author={Thomas Barton and Richard M. Timoney},
   title={Weak*-continuity of Jordan triple products and its applications},
   journal={Math. Scand.},
   volume={59},
   number={2},
   date={1986},
   pages={177--191},
}

\bib{Blackadar2006}{book}{
   author={Blackadar, Bruce},
   title={Operator algebras. Theory of $C^*$-algebras and von Neumann algebras},
   series={Encyclopaedia of Mathematical Sciences},
   volume={122},
   publisher={Springer-Verlag},
   place={Berlin},
   date={2006},
   pages={xx+517},
  }

\bib{Choi74}{article}{
   author={Man Duen Choi},
   title={A Schwarz inequality for positive linear maps on C$^*$-algebras},
   journal={Illinois J. Math.},
   volume={18},
   date={1974},
   pages={565--574},
}

\bib{chubook}{book}{
   author={Chu, Cho-Ho},
   title={Jordan structures in geometry and analysis},
   series={Cambridge Tracts in Mathematics},
   volume={190},
   publisher={Cambridge University Press},
   place={Cambridge},
   date={2012},
   pages={x+261 },
   }

\bib{CivinYood1965}{article}{
   author={Paul Civin and Bertram Yood},
   title={Lie and Jordan structures in Banach algebras},
   journal={Pacific J. Math.},
   volume={15},
   number={3},
   date={1965},
   pages={775--797},
}

\bib{Dales2000}{book}{
    AUTHOR = {Dales, H. Garth},
     TITLE = {Banach algebras and automatic continuity},
    SERIES = {London Mathematical Society Monographs. New Series},
    VOLUME = {24},
      NOTE = {Oxford Science Publications},
 PUBLISHER = {The Clarendon Press Oxford University Press},
   ADDRESS = {New York},
      YEAR = {2000},
     PAGES = {xviii+907},
      ISBN = {0-19-850013-0},
   }

\bib{EffSto79}{article}{
   author={Effros, Edward G.},
   author={St\o rmer, Erling},
   title={Positive projections and Jordan structure in operator algebras},
   journal={Math. Scand.},
   volume={45},
   number={1},
   date={1979},
   pages={127--138},
}

\bib{FriRusPJM}{article}{
   author={Friedman, Yaakov},
   author={Russo, Bernard},
   title={Conditional expectation without order},
   journal={Pacific J. Math.},
   volume={115},
   number={2},
   date={1984},
   pages={351--360},
}

\bib{FriRusJFA}{article}{
   author={Friedman, Yaakov},
   author={Russo, Bernard},
   title={Solution of the contractive projection problem},
   journal={J. Funct. Anal.},
   volume={60},
   number={1},
   date={1985},
   pages={56--79},
}

\bib{FriRusJRAM85}{article}{
   author={Friedman, Yaakov},
   author={Russo, Bernard},
   title={Structure of the predual of a JBW*-triple},
   journal={J. Reine Angew. Math.},
   volume={356},
   date={1985},
   pages={67--89},
}

\bib{Kato76}{article}{
   author={Yoshinobu Kato},
   title={Some theorems on projections of von Neumann algebras},
   journal={Math. Japon},
   volume={21},
   number={4},
   date={1976},
   pages={367-370},
}

\bib{PeraltaEM15}{article}{
   author={Antonio M. Peralta},
   title={Positive definite hermitian mappings associated with tripotent elements},
   journal={Expo. Math.},
   volume={33},
   number={2},
   date={2015},
   pages={252--258},
}

\bib{Pluta2013}{book}{
   author={Pluta, Robert},
   title={Ranges of bimodule projections and conditional expectations},
   series={ },
   volume={ },
   note={ },
   publisher={Cambridge Scholars Publishing},
   place={Newcastle upon Tyne},
   date={2013},
   pages={ },
   isbn={ },
   isbn={ },
}

\bib{Rickart60}{book}{
    AUTHOR = {Rickart, Charles E.},
     TITLE = {General theory of {B}anach algebras},
    SERIES = {The University Series in Higher Mathematics},
 PUBLISHER = {D. van Nostrand Co., Inc., Princeton, N.J.-Toronto-London-New York},
      YEAR = {1960},
     PAGES = {xi+394},
   }

\bib{RobYou82}{article}{
   author={Robertson, A. Guyan},
   author={Youngson, Martin A.},
   title={Positive projections with contractive complements on Jordan algebras},
   journal={J. London Math. Soc.},
   volume={25},
   series={2},
   number={2}
   date={1982},
   pages={365--374},
}

\bib{Stone1937}{article}{
   author={Stone, Marshall H.},
   title={Applications of the theory of Boolean rings to general topology},
   journal={Trans. Amer. Math. Soc.},
   volume={41},
   number={}
   date={1937},
   pages={375--481},
}

\bib{Tomiyama1957}{article}{
   author={Tomiyama, Jun},
   title={On the projection of norm one in $W^*$-algebras},
   journal={Proc. Japan Acad.},
   volume={33},
   number={10},
   date={1957},
   pages={608--612},
}

\end{biblist}
\end{bibdiv}

\end{document}